\theoremstyle{thmstyleone}%
\newtheorem{theorem}{Theorem}
\newtheorem{proposition}[theorem]{Proposition}%
\newtheorem{fact}{Fact}
\newtheorem{lemma}{Lemma}
\newtheorem{corollary}{Corollary}
\theoremstyle{thmstyletwo}%
\newtheorem{example}{Example}%
\newtheorem{remark}{Remark}%
\theoremstyle{thmstylethree}%
\newtheorem{definition}{Definition}%
\newtheoremstyle{myextra}
  {3pt}      
  {3pt}      
  {\itshape} 
  {}         
  {\bfseries}
  {.}        
  {1em}      
  {}         
\theoremstyle{myextra}%
\newtheorem{examplecustom}{Example}%
\newtheorem{remarkcustom}{Remark}%
\newcommand{\sgn}{\operatorname{sgn}}
\begin{document}

\title{Generating Sets of Stochastic Matrices}


\author*[1]{\fnm{Frederik} \spfx{vom} \sur{Ende} 
}\email{frederik.vom.ende@fu-berlin.de}

\author[2]{\fnm{Fereshte} \sur{Shahbeigi} 
}\email{fereshteshahbeigi@gmail.com}


\affil*[1]{\orgdiv{Dahlem Center for Complex Quantum
Systems}, \orgname{Freie Universität Berlin}, \orgaddress{\street{Arnimallee 14}, \city{Berlin}, \postcode{14195},
\country{Germany}}}

\affil[2]{\orgdiv{Research Center for Quantum Information
Institute of Physics}, \orgname{Slovak Academy of Sciences}, \orgaddress{\street{Dúbravská cesta 9}, \city{Bratislava}, \postcode{84511},
\country{Slovakia}}}


\abstract{This paper introduces the concept of a generating set for stochastic matrices---a subset of matrices whose repeated composition generates the entire set. Understanding such generating sets requires specifying the ``indivisible elements'' and ``building blocks'' within the set, which serve as fundamental components of the generation process. Expanding upon prior studies, we develop a framework that formalizes divisibility in the context of stochastic matrices. We provide a sufficient condition for divisibility that is shown to be necessary in dimension $n=3$, while for $n=2$, all stochastic matrices are shown to be divisible. Using these results, we construct generating sets for dimensions $2$ and $3$ by specifying the indivisible elements, and, importantly, we give an upper bound for the number of factors required from the generating set to produce the entire semigroup.}

\keywords{stochastic matrices, indivisible stochastic matrices, building blocks, generators of stochastic matrices}


\pacs[MSC Classification]{15B51, 20M10 , 20M13 , 47D03}

\maketitle
\newpage
\section{Introduction}
\label{sec:intro}
Stochastic matrices, characterized by entry-wise non-negative matrices that sum to one in each column, play a leading role in studying stochastic processes, with applications ranging from Markov chains and population dynamics \cite{Norris97} to quantum channels \cite{Heinosaari12} and machine learning \cite{Privault24}. An intriguing aspect of stochastic matrices is understanding the minimal sets required to generate all possible transitions. This raises the question: can one identify a subset of stochastic matrices whose products reproduce the entire set of stochastic matrices? 

On the one hand, given that stochastic matrices form a semigroup, the above question is related to finding a set of generators for a semigroup. There are known results in this direction for some discrete and continuous groups, as well as for certain semigroups. For instance, the group ${\sf SL}(n,\mathbb Z)$, \mbox{$n\neq 4$} can be generated using two matrices \cite{GT93} with a generalization to special linear matrices over finite fields \cite{Ruskuc95}. On the other hand, the question of generating sets within the quantum realm has been posed and results for qubit (i.e., two-dimensional) channels specifically unital qubit channels, termed the universal set of channels \cite{BGN14}, have been established. Nonetheless, to our best knowledge,  there is a lack of similar studies for the set of stochastic matrices, either as a semigroup or as the classical counterparts of quantum channels.

The notion of generating sets for stochastic matrices vividly evokes the concept of divisibility, i.e., if a given stochastic matrix can be written as \textit{nontrivial} product of two of them. The importance of this relationship arises from the fact that any set of generators must contain some indivisible elements. Thus, it is crucial to identify the indivisible elements in the set of stochastic matrices. Surprisingly, even though (in)divisibility has been researched in the set of quantum channels \cite{Wolf08a}, entry-wise non-negative matrices \cite{RS74,dCG81}, and bistochastic matrices \cite{PHS98}, there is a lack of rigorous study of (in)divisibility in the set of stochastic matrices. In particular, we note that a divisible stochastic matrix is also divisible in the set of non-negative matrices \cite{RS74}, while the converse is not necessarily true.

Along similar lines, another essential related concept to address is that of \textit{building blocks}. Roughly speaking, a building block refers to a member of the set that is divisible, yet it is present in all of its (non-trivial) decompositions. 
The building block idea has since been applied to quantum channels to obtain simple sufficient criteria for channel divisibility \cite{vomEnde24ChannelDiv}. 

Our work tackles the question of generating sets of stochastic matrices. To this end, in parallel with the ideas outlined earlier, we study indivisibility and building blocks of stochastic matrices. In particular, we discuss that indivisibility is rooted in the fact that stochastic matrices form a semigroup rather than a group, and we provide a sufficient criterion for divisibility of stochastic matrices that is shown to also yield a necessary condition for stochastic matrices of dimensions $2$ and $3$. Identifying the indivisible elements and building blocks finally lets us present different generating sets for stochastic matrices of dimensions $2$ and $3$.

This paper is structured as follows.
In Section~\ref{sec_prelim} we recap and introduce this work's underlying concepts, such as divisibility and building blocks.
Our main results are then contained in Section~\ref{sec_main}.
More precisely, in Section~\ref{subsec_main_1} we adjust---in a non-trivial way---known divisibility results from general non-negative matrices to stochastic matrices.
Based on this, in Section~\ref{subsec_main_2} we present explicit generating sets of the stochastic matrices in two as well as three dimensions, where we also give an upper bound on how many prime factors one needs in the worst case.
Finally, we give a brief outlook in Section~\ref{sec_outlook}, where we also discuss how to generalize our construction to arbitrary finite dimensions.

\section{Preliminaries: Divisibility, Building Blocks, and Generating Sets}\label{sec_prelim}

Our starting point is a semigroup $(S,\circ)$ with identity $e$, also known as \textit{monoid}.
Studying generating sets of semigroups, i.e., subsets $G\subseteq S$ such that the semigroup $\langle G\rangle_s:=\{g_1\circ\ldots\circ g_m:m\in\mathbb N, g_1,\ldots,g_m\in G\}$ generated by $G$ is all of $S$, comes with the important notion of ``indivisible'' or
``prime'' elements\footnote{
In a commutative ring, the concept analogous to ``indivisibility'' is ``irreducibility'', while prime elements satisfy $p|ab$ $\Rightarrow$ $p|a$ or $p|b$ \cite{Sharpe87,AVL96}.
To avoid confusion, we will consistently use the term ``indivisible'' in this work.
}. These are the elements
which cannot be written as a non-trivial product of elements in $S$.
The term ``non-trivial'' here is key because one can of course always write $x=x\circ e$ or $x=(x\circ y)\circ y^{-1}$, assuming $y\in S$ has an inverse $y^{-1}$ which is also in $S$.
Thus, we first need to define the \textit{group of units}
\cite{HHL89,PHS98}
\begin{equation}\label{eq:S_inv}
S^{-1}:=\{x\in S:\exists_{y_x\in S}\ \ x\circ y_x=y_x\circ x=e\}\,.
\end{equation}
If such $y_x$ exists, it is necessarily unique which is why for all $x\in S^{-1}$ we may write $x^{-1}:=y_x$.

Note that the composition of any two elements of $S^{-1}$ is invertible and therefore belongs to $S^{-1}$. This guarantees the closure property and thus $(S^{-1},\circ)$ forms a group. This set is sometimes denoted by $H(S)$ to distinguish it from the elements of $S$ which are invertible in a more general sense (e.g., invertible as a matrix where the inverse is not in $S$).
In particular, if $S$ is itself a group, then $S^{-1}=S$, but it can of course happen that a semigroup has a trivial group of units, $S^{-1}=\{e\}$, e.g., $S=(\mathbb N,\cdot)$.
With all this in mind, the following definition is reasonable \cite[Def.~2.2]{PHS98}.
Be aware that, for the sake of simplicity, we will henceforth drop the symbol $\circ$ whenever convenient.\medskip

\begin{definition}\label{def_divisibility}
Given a monoid $(S,\circ)$, we say that $x\in S$ is \textit{indivisible} if for every decomposition $x=yz$ with $y,z\in S$ it holds that either $y$ or $z$---but not both---are in $S^{-1}$. Otherwise, $x$ is called \textit{divisible} (in $S$).
\end{definition}\medskip

Two remarks are in order:
First, this definition
covers prime numbers (when considering $S=(\mathbb N,\cdot)$), existing notions of ``prime'' matrices \cite{RS74,dCG81,PHS98,vandenHof99},
as well as the notion of indivisible ``quantum channels''\,\footnote{
For those familiar with quantum information theory: the connection of divisibility of channels and our Definition~\ref{def_divisibility} comes from the well-known fact that the group of units of the quantum channels are precisely the unitary channels
\cite[Proposition~4.31]{Heinosaari12}.
If one replaces complete positivity by positivity, then
the group of units is
generated by the unitary channels
together with the transposition map
\cite[Ch.~2, Corollary~3.2]{Davies76}.
}
\cite{Wolf08a}.
Second, with this definition, the indivisible elements of $S$ necessarily form a subset of $S\setminus S^{-1}$. Indeed, all $x\in S^{-1}$ can be written as $x=x\circ e$ so $x$ is divisible as it is a product of two elements in $S^{-1}$.
This is motivated by the fact that if $S^{-1}$ featured any indivisible elements, then unique (prime) factorizations could not exist.
As a direct consequence, one gets the following:\medskip
\begin{fact}
If $S$ is a group, then it does not admit any indivisible elements. In this sense, divisibility is a concept inherent to semigroups. 
\end{fact}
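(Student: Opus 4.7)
The plan is to unpack Definition~\ref{def_divisibility} in the case where $S$ happens to be a group and to exhibit, for every $x\in S$, a decomposition $x=yz$ witnessing divisibility.

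First I would recall the elementary observation made already in the text: if $(S,\circ)$ is a group, then every element of $S$ has an inverse in $S$, so by the definition \eqref{eq:S_inv} of the group of units we have the equality $S^{-1}=S$. This reduces the problem to purely syntactic reasoning about the clause ``either $y$ or $z$---but not both---are in $S^{-1}$''.

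Next, I would pick an arbitrary $x\in S$ and use the trivial factorisation $x=x\circ e$, where $e$ is the identity of the monoid. Since $S^{-1}=S$, both factors $x$ and $e$ lie in $S^{-1}$. Thus the condition ``exactly one of the two factors belongs to $S^{-1}$'' fails for this decomposition, which by Definition~\ref{def_divisibility} means $x$ is divisible. As $x$ was arbitrary, $S$ has no indivisible elements.

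I expect no real obstacle: the argument is a one-line check that $x=x\circ e$ is a ``trivial'' product of two units in the group case. The only subtlety to keep in mind is that the ``but not both'' clause in the definition is essential here---this is exactly why a single trivial factorisation suffices to refute indivisibility in a group, and also why the set of indivisible elements in a general monoid is contained in $S\setminus S^{-1}$, as already pointed out in the excerpt. The second sentence of the Fact is not a formal mathematical statement but an interpretive remark, so I would leave it without further comment.
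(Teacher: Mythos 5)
Your argument is correct and coincides with the paper's own reasoning: the text establishes that every $x\in S^{-1}$ is divisible via the decomposition $x=x\circ e$ (both factors being units, so the ``but not both'' clause fails), and the Fact follows since $S^{-1}=S$ when $S$ is a group. No gaps.
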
\medskip
As we are interested in generating sets of semigroups, divisibility is not the only concept we will need. Indeed, it can happen that some $x\in S$ is divisible, but every decomposition $x=yz$ features $x$ itself (resp.~some element from $S^{-1}xS^{-1}$).
Before illustrating this with an example, recall that the $n\times n$ (column-)stochastic matrices, denoted by $s(n)$, are those $A\in\mathbb R^{n\times n}$ which satisfy $a_{jk}\geq 0$ for all $j,k=1,\ldots,n$, as well as $\sum_{j=1}^n a_{jk}=1$ for all $k=1,\ldots,n$.
Then
\begin{equation}\label{eq:ex_building_block_but_not_prime}
A=\begin{pmatrix}
1&1\\0&0
\end{pmatrix}\in s(2)
\end{equation}
is an example of a self-referential decomposition as $A$ is divisible ($A^2=A$) but $A$ cannot be decomposed into stochastic matrices neither of which is $A$ (up to permutation).
Hence, such elements of $S$ have to be contained in every generating set.
This leads to the following definition, which is fundamental to understanding generating sets.
\medskip\begin{definition}\label{def_building_block}
Given a monoid $(S,\circ)$, we say that $x\in S$ is a \textit{building block} (of $S$) if for every decomposition $x=yz$ with $y,z\in S$ it holds that $y\in x S^{-1}$, or $z\in S^{-1} x$, or both.
We write $B(S)$ (or just $B$, if $S$ is clear from the context) for the collection of all building blocks of $S$.
\end{definition}\medskip
Note that, by definition, the set of building blocks depends crucially on $S$ itself. For example, while the matrix in Eq.~\eqref{eq:ex_building_block_but_not_prime} is a building block of
$s(2)$, if we drop the normalization condition and define $S$ as the semigroup $\mathbb R_+^{2\times 2}$ of entry-wise non-negative $2\times 2$ matrices, then $A$ is not a building block anymore.
To clarify the relation between the previously given definitions, we state the following lemma.
The proof is straightforward and thus left to the reader.
\medskip\begin{lemma}\label{lemma_indiv_build_gen}
Given a monoid $(S,\circ)$, the following statements hold.
\begin{itemize}
\item[(i)] If $x\in S$ is indivisible, then $x\in B(S)$. 
\item[(ii)] If $G$ is a generating set of $S$, then $(S^{-1} x S^{-1})\cap G\neq\emptyset$ for all $x\in B(S)$.
\end{itemize}
\end{lemma}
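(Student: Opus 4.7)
Part (i) should be nothing more than unpacking the definitions. Assume $x\in S$ is indivisible and let $x=yz$ with $y,z\in S$ be any decomposition. By Definition~\ref{def_divisibility}, exactly one of $y,z$ lies in $S^{-1}$. If $y\in S^{-1}$, then $z=y^{-1}x\in S^{-1}x$; if instead $z\in S^{-1}$, then $y=xz^{-1}\in xS^{-1}$. Either alternative is enough to verify the building-block condition of Definition~\ref{def_building_block}, so $x\in B(S)$.

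For part (ii), fix $x\in B(S)$ and a generating set $G$ of $S$. Because $G$ generates $S$, there exist $m\in\mathbb N$ and $g_1,\ldots,g_m\in G$ with $x=g_1\cdots g_m$, and my plan is to show by induction on $m$ that at least one $g_j$ belongs to $S^{-1}xS^{-1}$. The base case $m=1$ is immediate, since $g_1=x=e\cdot x\cdot e\in S^{-1}xS^{-1}$. For the inductive step, split $x=g_1\cdot(g_2\cdots g_m)$ and apply the building-block property to this decomposition: either $g_1\in xS^{-1}\subseteq S^{-1}xS^{-1}$, in which case $g_1$ is the desired generator, or $g_2\cdots g_m\in S^{-1}x$, i.e.\ $g_2\cdots g_m=vx$ for some $v\in S^{-1}$.

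The main obstacle is the second case: one would like to invoke the induction hypothesis on the shorter product $g_2\cdots g_m$, but this product equals $vx$ rather than $x$, and $vx$ need not coincide with $x$. To bridge this gap I would prove a short auxiliary statement: \emph{if $x\in B(S)$ and $u\in S^{-1}$, then $ux\in B(S)$, and moreover $S^{-1}(ux)S^{-1}=S^{-1}xS^{-1}$}. The second equality is just $S^{-1}u=S^{-1}$, which holds because $S^{-1}$ is a group. For the first, if $ux=yz$ then $x=(u^{-1}y)z$ is a decomposition of $x$ in $S$, so the building-block property of $x$ yields either $u^{-1}y=xs$ with $s\in S^{-1}$ (giving $y=(ux)s\in(ux)S^{-1}$) or $z=wx$ with $w\in S^{-1}$ (giving $z=(wu^{-1})(ux)\in S^{-1}(ux)$).

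Equipped with this auxiliary lemma, $vx$ is again a building block and $g_2\cdots g_m=vx$ is a factorisation of this building block into $m-1$ elements of $G$. The induction hypothesis then supplies some $j\in\{2,\ldots,m\}$ with $g_j\in S^{-1}(vx)S^{-1}\cap G=(S^{-1}xS^{-1})\cap G$, completing the proof. The only non-cosmetic ingredient is the closure of $B(S)$ under left-multiplication by units, which is where the group structure of $S^{-1}$ is genuinely used.
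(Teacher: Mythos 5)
Your proof is correct. The paper itself leaves this lemma's proof to the reader, so there is no official argument to compare against; judged on its own terms, your write-up is complete. Part (i) is indeed pure definition-unpacking, and your case split (if $y\in S^{-1}$ then $z=y^{-1}x\in S^{-1}x$; if $z\in S^{-1}$ then $y=xz^{-1}\in xS^{-1}$) lands exactly on the two alternatives in Definition~\ref{def_building_block}. For part (ii), you correctly identified the one non-obvious point---that the induction cannot proceed on the fixed element $x$ alone, since the second branch of the building-block dichotomy replaces $x$ by $vx$ with $v\in S^{-1}$---and your auxiliary lemma (that $B(S)$ is closed under left multiplication by units and that $S^{-1}(ux)S^{-1}=S^{-1}xS^{-1}$) closes that gap cleanly. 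The only cosmetic improvement I would suggest is to state the induction hypothesis explicitly as quantified over \emph{all} building blocks and all factorizations of length $<m$, rather than over the single element $x$, since you invoke it for the different building block $vx$; as written this is implicit but should be made precise in a final version.
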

\noindent 
The converse to Lemma~\ref{lemma_indiv_build_gen} (i) need not hold
as the matrix $A$ from~\eqref{eq:ex_building_block_but_not_prime} illustrates.
Moreover, if $S$ is a semigroup where every one-sided inverse is a two-sided inverse (e.g., any semigroup of real or complex square matrices), then $S^{-1}\subseteq B(S)$:
If $x=yz$ for some $y,z\in S$, then $e=(x^{-1}y)z$
(i.e., $z\in S^{-1}$)
which shows $y=xz^{-1}\in x S^{-1}$, as desired.
However, in this case $S^{-1} x S^{-1}=S^{-1}$ so Lemma~\ref{lemma_indiv_build_gen} states that every generating set of $S$ needs to contain at least one element from the group of units of $S$.

While the building blocks of a semigroup \textit{can} be a generating set (e.g., $S=(\mathbb N,\cdot)$ where $B(S)$ are the primes together with $1$) we point out that this is not true in general.
To illustrate this---as well as further relations between generating sets, building blocks, and the group of units---we present some simple examples.

\smallskip\begin{examplecustom}\label{ex_diff_prime_build}
\begin{itemize}
\item[(i)] Consider $S=([0,1],\cdot)$. It has no indivisible elements and its building blocks read $\{0,1\}$.
In particular, the building blocks do not generate $S$ as $\langle B(S)\rangle_s=B(S)\subsetneq S$.
However,
$\{0\}\cup[1-\varepsilon,1]$ 
is obviously a generating set for all $\varepsilon\in(0,1)$.
\item[(ii)] Next, we present a semigroup for which the building blocks are more than just the indivisible elements combined with the group of units, but
one needs all building blocks to generate $S$:
Consider
$$
S=\Big\{ \begin{pmatrix}
 1&0\\0&1
\end{pmatrix},\begin{pmatrix}
 1&1\\1&0
\end{pmatrix},\begin{pmatrix}
 1&1\\0&1
\end{pmatrix},\begin{pmatrix}
 1&1\\1&1
\end{pmatrix} \Big\},
$$
which is readily verified to be a subsemigroup of the boolean $2\times 2$ matrices
(i.e., after multiplication
all positive entries in the final result are replaced by ones) 
\cite{dCG81}.
Explicitly mapping out $S\cdot S$ shows that $S^{-1}=\{{\bf1}_2\}$, all elements of $S$ are divisible, and $\langle B\rangle_s=S$ where the building blocks $B$ of $S$ are given by
$$
B=\Big\{ \begin{pmatrix}
 1&0\\0&1
\end{pmatrix},\begin{pmatrix}
 1&1\\1&0
\end{pmatrix},\begin{pmatrix}
 1&1\\0&1
\end{pmatrix}\Big\}\subsetneq S\,.
$$
\item[(iii)] Finally, not all building blocks have to appear in every generating set: the set $S=\{{\bf1}_2,\sigma_x\}$ containing the identity and the Pauli matrix $\sigma_x$
satisfies
$S=S^{-1}=B(S)$ (because $S$ is itself a group). However, $G=\{\sigma_x\}\subsetneq B$ is of course a generating set for $S$.
\end{itemize} 
\end{examplecustom}\medskip

Importantly, Example~\ref{ex_diff_prime_build}~(i) constitutes an example where one can divide an element infinitely many times. This opposes, e.g., the discrete ordered semigroup $(\mathbb N,\cdot)$ where one arrives at an indivisible element in finitely many steps.
Indeed, the number of factors one needs to reconstruct a given element is the final concept we introduce in this section, which has previously been studied for the Lie group $\mathsf{SL}(2)$ \cite{HMM03,Mittenhuber02}:
\medskip\begin{definition}
Given a monoid $(S,\circ)$, a generating set $G\subseteq S$, and any $x\in S$, we define
$N_G(x)$ as the smallest possible number of elements from $G$ one needs to generate $x$.
Formally,
$$
N_G(x):=\min\{m:\exists_{x_1,\ldots,x_m\in G}\ \ x=x_1\circ\ldots\circ x_m\}\,.
$$
Moreover, we define $N_G(S):=\sup_{x\in S}N_G(x)$.
\end{definition}\medskip
Note that the above definition of course allows for repetition, i.e., it is possible to have $x_i=x_j$ for different $i$ and $j$. Moreover, while one always has $N_G(x)<\infty$ for all $x\in S$ by the definition of a generating set, it can happen that there does not exist a uniform bound on the number of needed factors; in this case $N_G(S)=\infty$.
For example, trivially, $N_S(S)=1$ for all semigroups, while $N_P(\mathbb N)=\infty$
(with $P$ the primes)
as there exist numbers with arbitrarily many prime factors.
The motivation for the function $N_G$ comes from an application point of view: 
\medskip\begin{remark}\label{rem_div_error_bound}
Assume $S$ is a set of ``physically plausible'' operations, and $G$ is the subset of operations one can implement in some experiment. Then, one wants to know (i) whether $G$  can generate everything, i.e., whether $\langle G\rangle_s=S$, and (ii) if so, whether one can bound how often operations from $G$ have to be prepared, i.e., does $N_G(S)<\infty$ hold and what is its value?
This second point is also what allows for simple error approximations: Assume one (e.g., an experimenter) does not have access to $G$ exactly, but only up to precision $\varepsilon>0$ (w.r.t.~some submultiplicative norm). Then every element from $S$ is guaranteed to be approximable with an error proportional
to\footnote{
Given any $g_1,\ldots,g_\ell\in G$, $\tilde g_1,\ldots,\tilde g_\ell\in S$, $\ell\in\mathbb N$ this follows from the standard telescope identity $\prod_{j=1}^\ell g_j-\prod_{j=1}^\ell\tilde g_j=\sum_{i=1}^\ell(\prod_{j=1}^{i-1}g_j)(g_j-\tilde g_j)(\prod_{j=i+1}^{\ell}\tilde g_j)$. Indeed, if the semigroup is bounded (with constant $M>0$), then $\|g_j-\tilde g_j\|<\varepsilon$ for all $j$ implies the estimate
$
\|\prod_{j=1}^\ell g_j-\prod_{j=1}^\ell\tilde g_j\|\leq M^{\ell-1}\ell\varepsilon\leq M^{\ell-1}N_G(S)\varepsilon
$,
by definition of $N_G(S)$.
For example, for $S=s(n)$ and $\|A\|:=\sup_{\|x\|_1=1}\|Ax\|_1$ one has $M=1$ so the error bound is $N_G(S)\varepsilon$.
}
$N_G(S)\varepsilon$.
\end{remark}\medskip
To illustrate the quantity $N_G(S)$ and show that it makes sense even if $S$ is not a semigroup but a group, consider the following example \cite[Ch.~4.2]{NC10}:
\medskip\begin{example}\label{ex_gen_set_unitary}
The set $G':=\{e^{it\sigma_z}:t\in\mathbb R\}\cup\{e^{it\sigma_y}:t\in\mathbb R\}$ (with $\sigma_y,\sigma_z$ the usual Pauli matrices)
satisfies $\langle G'\rangle_s=\mathsf{SU}(2)$ as well as $N_{G'}(\mathsf{SU}(2))=3$. That is, every $U\in\mathsf{SU}(2)$ can be written as a product of $3$ elements from $G'$, and the number $3$ is optimal.
This is a simple consequence of the Z-Y decomposition for qubits \cite[Theorem~4.1]{NC10}.
Interestingly, the generating set $G'$ can be made ``significantly'' smaller (in the sense that one of the $1$-parameter groups can be replaced by a single unitary) with the trade-off of requiring at most $5$ factors instead of $3$.
For this consider
 $G:= \{e^{it\sigma_z}:t\in\mathbb R\}\cup \{e^{i\pi\sigma_y/4}\}$; we claim that $\langle G\rangle_s=\mathsf{SU}(2)$ as well as $N_G(\mathsf{SU}(2))=5$.
Indeed, based on the Z-Y decomposition one for all $t,\phi_x,\phi_y\in\mathbb R$ finds
\begin{align*}
 &\begin{pmatrix}
 \cos(t)e^{i\phi_x}&\sin(t)e^{i\phi_y}\\
 -\sin(t)e^{-i\phi_y}&\cos(t)e^{-i\phi_x}
\end{pmatrix}\\
&\qquad\qquad= e^{i({\phi_x+\phi_y})\sigma_z/2}e^{it\sigma_y}e^{i({\phi_x-\phi_y})\sigma_z/2} \\
&\qquad\qquad=e^{i({2\phi_x+2\phi_y+5\pi})\sigma_z/4}e^{i\pi\sigma_y/4}e^{i(t+\pi/2)\sigma_z}e^{i\pi\sigma_y/4}e^{i({2\phi_x-2\phi_y+\pi})\sigma_z/4} \,.
\end{align*}
As the matrix on the left-hand side is a generic element of $\mathsf{SU}(2)$ this shows $\langle G\rangle_s=\mathsf{SU}(2)$ as well as $N_G(\mathsf{SU}(2))\leq 5$.
The final step is to see that, e.g.,
$$
\begin{pmatrix}
 \cos(\pi/8)&\sin(\pi/8)\\-\sin(\pi/8)&\cos(\pi/8)
\end{pmatrix}\in\mathsf{SU}(2)
$$
cannot be written as a product of $4$ or less elements from $G$ as a straightforward computation shows.
\end{example}\medskip

\noindent The question of reachability involving discrete control operators from a Lie-theoretic perspective has also been looked at in the context of quantum walks \cite{AD09,AD12}. Transferring their idea to the above example lets one arrive at the Lie algebra generated by $\{\sigma_z, e^{i\pi\sigma_y/4}\sigma_ze^{-i\pi\sigma_y/4}$, $ (e^{i\pi\sigma_y/4})^2\sigma_z(e^{-i\pi\sigma_y/4})^2,\ldots\}$, that is, the continuous direction one has access to is iteratively conjugated by the discrete control operator. However, this does not say anything about the number of factors one needs.

Either way, the trade-off between the ``size'' of the generating set and the number of needed factors in Example~\ref{ex_gen_set_unitary} can also occur in a more extreme way.
To see this, we go back to the Example~\ref{ex_diff_prime_build}~(i), i.e., $S=([0,1],\cdot)$ and $G_\varepsilon:=\{0\}\cup[1-\varepsilon,1]$, $\varepsilon\in(0,1)$. Obviously, $N_G(S)=\infty$ as $N_{G_\varepsilon}\left((1-\varepsilon)^n\right)=n$ for all $n\in\mathbb N$. 
On the other hand, if one considers $G:=[0,\varepsilon_1]\cup[\varepsilon_2,1]$ for some $0<\varepsilon_1<\varepsilon_2<	1$, then $N_G(S)=\lceil \frac{\ln(\varepsilon_1)}{\ln(\varepsilon_2)} \rceil$ as this is the smallest possible number $m$ such that $\varepsilon_2^m\leq \varepsilon_1$.

\section{Main Results}\label{sec_main}
Having set the stage, we now investigate divisibility and generating sets for the semigroup of stochastic matrices $s(n)$ in the following two sections.

\subsection{Divisibility of stochastic matrices}\label{subsec_main_1}
First recall that $s(n)^{-1}\simeq S_n$ (where $S_n$ is the symmetric group of order $n$), i.e., the group of units corresponding to $s(n)$ are precisely the permutation matrices
\cite[Section~2.3.4]{BP79}.
With this, Definitions~\ref{def_divisibility} \& \ref{def_building_block} become the following:
$A\in s(n)$ is called indivisible (resp., a building block) if for every decomposition $A=A_1A_2$ into $A_1,A_2\in s(n)$ it holds that either $A_1$ or $A_2$ is a permutation matrix (resp., $A_1$ or $A_2$ or both are $A$ up to 
permutation matrices). Henceforth---unless specified otherwise---when writing ``(in)divisible'', we mean ``(in)divisible in $s(n)$''.

Obviously, given any $A\in s(n)$ and permutations\footnote{
Given any 
$\pi\in S_n$ we define the induced permutation matrix via $\underline{\pi}:=\sum_je_je_{\sigma(j)}^T$. The usual identities $\underline{\sigma\circ\tau}=\underline{\tau}\cdot\underline{\sigma}$, $(\underline{\sigma}x)_j=x_{\sigma(j)}$, and $(\underline{\sigma}A\underline{\tau})_{jk}=A_{\sigma(j)\tau^{-1}(k)}$ hold.
}
$\pi,\tau\in S_n$, $A$ is divisible if and only if $\underline{\pi}A\underline{\tau}$ is divisible.
Factoring out this group action of $S_n\times S_n$ on $s(n)$ yields equivalence classes of (in)divisible matrices.
Now, as first used in \cite{RS74}, an important tool in analyzing the divisibility of stochastic matrices is the usual sign function 
\begin{equation}\label{eq:sign-definition}
 \sgn(x)=\begin{cases}
 \frac{x}{|x|}\quad &x\neq0\\
 0\quad &x=0
 \end{cases}.
\end{equation}
This function is applied to a matrix entry-wise. In particular, $A\in s(n)$ is a permutation matrix if and only if $\sgn(A)
$ is a permutation matrix.
Also, a readily verified identity which holds for all $B,C\in\mathbb R_+^{n\times n}$ is 
\begin{equation}\label{eq:sgn_multiplicative}
 \sgn(BC)=\sgn(\sgn(B)\sgn(C))\,.
\end{equation}
In particular, this implies
$\sgn(\underline{\pi}A\underline{\tau})=\underline{\pi}\sgn(A)\underline{\tau}$
and, as already observed in \cite{dCG81}, this allows for a handy necessary criterion for indivisibility by considering the finite set  $\sgn(s(n)):=\{\sgn(A):A\in s(n)\}$ with $(2^n-1)^n$ elements instead:
\medskip\begin{lemma}\label{lemma_indiv}
Let $A\in s(n)$ and assume that $\sgn(A)$ is indivisible, that is, $\sgn(A)=\sgn(\sgn(B)\sgn(C))$ for any $B,C\in s(n)$ implies that $B$ or $C$ is a permutation matrix.
Then $A$ is indivisible.
\end{lemma}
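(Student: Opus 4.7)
The plan is to prove the contrapositive: if $A\in s(n)$ is divisible, then $\sgn(A)$ admits a decomposition of the form $\sgn(A)=\sgn(\sgn(B)\sgn(C))$ with $B,C\in s(n)$ neither being a permutation matrix, which contradicts the hypothesis of the lemma.

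Concretely, I would begin by assuming that $A$ is divisible, that is, there exist $B,C\in s(n)$ with $A=BC$ such that neither $B$ nor $C$ is a permutation matrix. Applying the sign function entry-wise to both sides yields $\sgn(A)=\sgn(BC)$. Then I would invoke the multiplicativity-type identity \eqref{eq:sgn_multiplicative}, valid for any pair of entry-wise non-negative matrices, to rewrite this as
\begin{equation*}
\sgn(A)=\sgn(BC)=\sgn\bigl(\sgn(B)\sgn(C)\bigr).
\end{equation*}
This exhibits exactly the kind of decomposition the hypothesis of the lemma forbids: the witnesses $B$ and $C$ lie in $s(n)$, and by choice neither of them is a permutation matrix. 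Therefore the assumption that $\sgn(A)$ is indivisible in the stated sense is violated, proving the contrapositive.

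I do not anticipate any real obstacle: the whole argument hinges on the identity \eqref{eq:sgn_multiplicative}, which has already been recorded, and on the observation (made right before the lemma) that $A\in s(n)$ is a permutation matrix if and only if $\sgn(A)$ is, so that the notions of ``permutation matrix'' for $B,C$ and for $\sgn(B),\sgn(C)$ agree. The only minor care needed is to state the contrapositive cleanly, because the hypothesis is phrased as a universal statement quantifying over $B,C\in s(n)$ rather than over $\sgn$-images directly.
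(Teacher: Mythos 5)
Your proof is correct and follows exactly the route the paper intends: the lemma is stated without proof precisely because it is an immediate consequence of the identity \eqref{eq:sgn_multiplicative} together with the observation that $B\in s(n)$ is a permutation matrix if and only if $\sgn(B)$ is. Your contrapositive argument via $\sgn(A)=\sgn(BC)=\sgn(\sgn(B)\sgn(C))$ is the standard (and essentially only) way to prove it.
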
\smallskip
As an example, the matrix
\begin{equation}\label{eq:prime_3}
 \begin{pmatrix}
 0&1&1\\1&0&1\\1&1&0
\end{pmatrix}
\end{equation}
is indivisible in the sense of Lemma~\ref{lemma_indiv} (and it is even indivisible in $\mathbb R_+^{3\times3}$), cf.~\cite{dCG81,RS74}.
Therefore, by Lemma~\ref{lemma_indiv}, every $A\in s(3)$ for which $\sgn(A)$ is equal to~\eqref{eq:prime_3} is itself indivisible.
This readily transfers to the equivalence class of $A$ under $S_3\times S_3$, i.e., if $\sgn(A)$ is of one of the following forms, then $A$ is indivisible:
$$\!
 \begin{pmatrix}
 0&1&1\\1&0&1\\1&1&0
\end{pmatrix},
 \begin{pmatrix}
 0&1&1\\1&1&0\\1&0&1
\end{pmatrix},
 \begin{pmatrix}
 1&0&1\\0&1&1\\1&1&0
\end{pmatrix},
 \begin{pmatrix}
 1&0&1\\1&1&0\\0&1&1
\end{pmatrix},
 \begin{pmatrix}
 1&1&0\\0&1&1\\1&0&1
\end{pmatrix},
 \begin{pmatrix}
 1&1&0\\1&0&1\\0&1&1
\end{pmatrix}.
$$
Moreover, the fact that this matrix 
is prime
is also in line with the fact that $\frac12\cdot$%
Eq.~\eqref{eq:prime_3} is an example of a doubly-stochastic 
matrix which cannot be written as the product of T-transforms \cite{MKS84}.\medskip

Either way, the sign function turns out to also be useful for proving sufficient criteria for divisibility;
the following lemma is an adjustment of \cite[Theorem~2.4]{RS74} from $\mathbb R_+^{n\times n}$ to $s(n)$:

\medskip\begin{lemma}\label{lemma_cond_div}
Let $A\in s(n)$, $n\geq 2$ such that $\sgn(A)e_i\geq\sgn(A)e_k$
for some $i,k\in\{1,\ldots,n\}$, $i\neq k$. Here $\{e_j\}_j$ denotes the standard basis. Then, $A$ is divisible and there exists a corresponding decomposition $A=BC$ into $B,C\in s(n)$ neither of which are permutation matrices, such that $B$ differs from $A$ in at most the $i$-th column, and
\begin{equation}\label{eq:C_twodim_block}
 C=\underline{\pi}\Big(\begin{pmatrix}
 1&a\\0&1-a
\end{pmatrix}\oplus{\bf1}_{n-2}\Big)\underline{\tau}
\end{equation}
for some $a\in(0,1]$, $\pi,\tau\in S_n$.
Moreover, if $Ae_i\neq e_j$ for any $j=1,\ldots,n$, then
the number of zero entries in $B$ is strictly larger than the number of zeros in $A$.
\end{lemma}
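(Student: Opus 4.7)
The strategy is to construct the decomposition $A=BC$ by hand, taking $C$ to be a simple column-mixing matrix of the prescribed block form and $B$ to coincide with $A$ in every column other than the $i$-th. The support containment $\sgn(A)e_i\geq\sgn(A)e_k$ is exactly what makes this possible: because the support of $A_k$ is contained in that of $A_i$, one may subtract a multiple $aA_k$ from $A_i$ while remaining non-negative. Quantitatively, for any $a\in(0,a^\ast]$ with $a^\ast:=\min_{j\in\operatorname{supp}(A_k)}(A_i)_j/(A_k)_j$, the vector $v_a:=(A_i-aA_k)/(1-a)$ is a well-defined stochastic vector whenever $a<1$.

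Given such an $a$, I would set $C\in s(n)$ by $Ce_i:=ae_k+(1-a)e_i$ and $Ce_l:=e_l$ for $l\neq i$; matching this to Eq.~\eqref{eq:C_twodim_block} is then a matter of choosing $\tau\in S_n$ with $\tau(2)=i$ and $\pi\in S_n$ with $\pi^{-1}(1)=k$, $\pi^{-1}(2)=i$. I would then set $B_l:=A_l$ for $l\neq i$ and $B_i:=v_a$, and verify $BC=A$ column by column: $(BC)e_l=Be_l=A_l$ for $l\neq i$, and $(BC)e_i=aB_k+(1-a)B_i=aA_k+(1-a)v_a=A_i$. In the generic case $A_i\neq A_k$, taking $a=a^\ast\in(0,1)$ does two things at once: it makes the $i$-th column of $C$ a genuine mixture (so $C$ is not a permutation matrix), and it forces $v_a$ to vanish at every index $j_0$ achieving the minimum---precisely where $A_i$ was strictly positive by the support containment---which delivers the strict gain in zeros claimed in the ``moreover'' part. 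The degenerate case $A_i=A_k$ (where $a^\ast=1$) is handled by taking $a=1$: then $Ce_i=e_k$ duplicates the $k$-th column of $C$ (so $C$ stays non-permutation, the block $\bigl(\begin{smallmatrix}1&1\\0&0\end{smallmatrix}\bigr)$ even carrying a zero row), while the column-$i$ equation reduces to $A_i=A_k=B_k$ and is automatically satisfied, leaving $B_i$ free to be chosen with as many zeros as needed.

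The step I expect to require the most care is certifying that $B$ itself is not a permutation matrix. The hypothesis $\sgn(A)e_i\geq\sgn(A)e_k$ with $i\neq k$ already excludes $A\in s(n)^{-1}$, since permutation matrices have pairwise disjoint column supports; typically this obstruction transfers to $B$ via some unchanged column of $A$ that is not a basis vector. The delicate borderline configuration is when $A_k$ is a basis vector and $A_i$ is a two-element convex combination, for then $v_{a^\ast}$ is itself a basis vector and $B$ can inadvertently complete to a permutation matrix; the remedy is either to pick $a$ slightly below $a^\ast$ (sacrificing the zero-gain, but only in the case $A_i$ happens to be a basis vector, which is excluded from the ``moreover'' hypothesis) or to choose $B_i$ directly in the degenerate branch. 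Once this bookkeeping is in place, the remainder of the proof is a routine column-by-column verification of stochasticity and of the identities used above.
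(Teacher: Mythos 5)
Your construction is essentially identical to the paper's proof: your $a^\ast$ is exactly the paper's auxiliary quantity $\varepsilon(Ae_i,Ae_k)$, your choices of $B$ and $C$ and the case split $a^\ast=1$ (i.e.\ $Ae_i=Ae_k$) versus $a^\ast<1$ coincide with the paper's two cases, and the zero-counting argument for the ``moreover'' clause is the same. The only divergence is your closing discussion of the borderline configuration where $B$ completes to a permutation matrix: note that there $A_i$ is a two-element mixture rather than a basis vector, so the ``moreover'' hypothesis is \emph{not} excluded and taking $a$ strictly below $a^\ast$ would forfeit the claimed gain of a zero---though this is a corner case the paper's own proof also leaves untreated.
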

\begin{proof}
 The first part relies on an auxiliary function, defined as
 \begin{align*}
 \varepsilon:\Delta^{n-1}\times\Delta^{n-1}&\to\mathbb R_+\\
 \varepsilon(v,w)&:=\min_{\{ j=1,\ldots,n : w_j>0 \}}\frac{v_j}{w_j}
 \end{align*}
 where, $\Delta^{n-1}:=\{v\in\mathbb R_+^n:\sum_{j=1}^nv_j=1\}$ denotes the standard probability simplex in $n$ dimensions. This function satisfies the following properties:
 \begin{itemize}
 \item If $\sgn(v)\geq\sgn(w)$, then $\varepsilon(v,w)>0$ (as $w_j>0$ implies $v_j>0$).
 \item For all $v,w\in\Delta^{n-1}$ one has $\varepsilon(v,w)\leq 1$:
 \begin{align*}
 1=\sum_{j=1}^nv_j\geq\sum_{j=1}^n\sgn(w_j)v_j&=\sum_{\{ j=1,\ldots,n : w_j>0 \}}w_j\frac{v_j}{w_j}\\
 &\geq\varepsilon(v,w)\sum_{\{ j=1,\ldots,n : w_j>0 \}}w_j=\varepsilon(v,w)
 \end{align*}
 \item
 $\varepsilon(v,w)=1$ if and only if $v=w$:
 While the ``only if''-part is trivial, for the ``if''-part note that $\varepsilon(v,w)=1$ implies $v_j\geq w_j$ for all $j$. This, in turn, for all $j$ shows $v_j=1-\sum_{k\neq j}v_k\leq1-\sum_{k\neq j}w_k=w_j$. Therefore, altogether, we get $v_j=w_j$.

 \end{itemize}
With this in mind, we prove the lemma in two steps. First, we prove the case where $\varepsilon(Ae_i,Ae_k)=1$, and after that, we prove that the lemma holds when this equality is violated.
\begin{itemize}
 \item[1.] Let $\varepsilon(Ae_i,Ae_k)=1$. As seen before, this means $Ae_i=Ae_k$. We define
 \begin{align*}
 B&:=\begin{cases}
 A-Ae_ie_i^T+e_ke_i^T&\substack{\text{ if }A-Ae_ie_i^T+e_ie_i^T\text{ is}\\\text{a permutation matrix}}\\
 A-Ae_ie_i^T+e_ie_i^T&\text{ else}
 \end{cases}
 \end{align*}
 and $C:={\bf1}-e_i e_i^T+e_k e_i^T\in s(n)$, so $C$ is of the desired form~\eqref{eq:C_twodim_block} with $a=1$. 
 Using $Ae_i=Ae_k$, a straightforward computation shows that $A=BC$ in any case. 
 Moreover, $B$ equals $A$ aside from, possibly, the $i$-th column which now is $e_i$ or $e_k$. This choice ensures that $B\in s(n)$ is not a permutation matrix, meaning $A$ is indeed divisible.
 \item[2.] Now for the case where $\varepsilon:=\varepsilon(Ae_i,Ae_k)<1$. Then $Ae_i-\varepsilon Ae_k\geq 0$ by definition of $\varepsilon$, which in turn implies that $\frac{Ae_i-\varepsilon Ae_k}{1-\varepsilon}\in\Delta^{n-1}$. Thus,
 $A=BC$ where $B:=A-\frac{\varepsilon}{1-\varepsilon}A(e_k e_i^T-e_i e_i^T)$ is equal to $A$ except for the $i$-th column which now reads $\frac{Ae_i-\varepsilon Ae_k}{1-\varepsilon}$, and $C:={\bf1}+\varepsilon(e_k e_i^T-e_i e_i^T)$. Moreover, one shows that all zeros of $Ae_i$ are also a zero of $\frac{Ae_i-\varepsilon Ae_k}{1-\varepsilon}$.
\end{itemize}
It remains to prove that $B$ has more zeros if $Ae_i\neq e_j$ for all $j=1,\ldots,n$. To see this, note that in Case 1~the vector $Ae_i$ has at least two non-vanishing entries whereas $Be_i$---which is either $e_i$ or $e_k$---has only one.
In case 2~the vector $\frac{Ae_i-\varepsilon Ae_k}{1-\varepsilon}$ has at least one more zero than $Ae_i$ by definition of $\varepsilon$.
Altogether, this concludes the proof.
\end{proof}

\noindent Note that the matrix $C$ in Eq.~\eqref{eq:C_twodim_block} (up to re-scaling) has been called ``elementary matrix'' \cite{CK03}.
Either way, we get the following result as a direct consequence of Lemma~\ref{lemma_cond_div}.
\medskip\begin{corollary}
 A stochastic matrix $A\in s(n)$ is divisible if there exists an $i\in\{1,\ldots,n\}$ such that $Ae_i>0$, i.e., $\sgn(A)e_i=(1,\ldots,1)^T$.
\end{corollary}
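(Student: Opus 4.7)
The plan is to obtain this corollary as an almost immediate consequence of Lemma~\ref{lemma_cond_div}. First I would dispose of the trivial case $n=1$, where $s(1)=\{(1)\}$ is a group and there is nothing to prove, so assume $n\geq 2$.

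The hypothesis $Ae_i>0$ means that every entry of the $i$-th column of $A$ is strictly positive, which is equivalent to $\sgn(A)e_i=(1,\ldots,1)^T$. Now for any $k\in\{1,\ldots,n\}$ with $k\neq i$, the vector $\sgn(A)e_k$ has entries in $\{0,1\}$, so entry-wise one trivially has
\[
\sgn(A)e_i=(1,\ldots,1)^T\geq\sgn(A)e_k.
\]
This is exactly the hypothesis needed to invoke Lemma~\ref{lemma_cond_div} with this choice of $i$ and any $k\neq i$, which immediately yields a decomposition $A=BC$ into stochastic matrices, neither of which is a permutation matrix. Hence $A$ is divisible.

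There is no real obstacle here; the only thing to check is that the rôles of $i$ and $k$ in Lemma~\ref{lemma_cond_div} match the orientation of the inequality (the column with more nonzero entries plays the rôle of $i$), and this is consistent with the formulation above. In particular, the case $n=2$ already exhibits the phenomenon: any $A\in s(2)$ with a strictly positive column is divisible, which aligns with the abstract's claim that all stochastic matrices in dimension~$2$ are divisible (the remaining $A\in s(2)$ being either permutations, already units, or of the self-referential form treated separately).
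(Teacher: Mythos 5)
Your proof is correct and matches the paper's intent exactly: the paper states this corollary as a direct consequence of Lemma~\ref{lemma_cond_div} without further argument, and your observation that the all-ones column $\sgn(A)e_i$ entry-wise dominates $\sgn(A)e_k$ for any $k\neq i$ is precisely that direct consequence. The closing remarks about $n=1$ and $n=2$ are harmless but not needed.
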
\medskip
This is a comparable, but stronger version of a result from quantum information theory which states that every full-rank quantum channel is divisible~\cite[Theorem~11]{Wolf08a}. Our corollary implies that, like with quantum channels, the subset of indivisible stochastic matrices is of measure zero in $s(n)$.

Next, we showcase the strength of this lemma by applying it to the smallest non-trivial dimensions $n=2,3$. For $n=2$, using Lemma~\ref{lemma_cond_div} one directly arrives at the following:
\medskip\begin{fact}
 All two-dimensional stochastic matrices are divisible.
\end{fact}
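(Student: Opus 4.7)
The plan is to deduce the fact as a direct corollary of Lemma~\ref{lemma_cond_div}, handling the group-of-units case by hand. Given any $A \in s(2)$, each column of $\sgn(A)$ must lie in the three-element set $\{(1,0)^T, (0,1)^T, (1,1)^T\}$, since a zero column would contradict the column-sum condition of a stochastic matrix. The hypothesis of Lemma~\ref{lemma_cond_div}---existence of distinct indices $i,k \in \{1,2\}$ with $\sgn(A)e_i \geq \sgn(A)e_k$---fails only when the two columns of $\sgn(A)$ are the two distinct standard basis vectors, since any other pair admits a dominance relation: the vector $(1,1)^T$ dominates both basis vectors, and identical columns are trivially comparable under the coordinate-wise order.

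Whenever such a dominance holds, Lemma~\ref{lemma_cond_div} immediately delivers a decomposition $A = BC$ with $B, C \in s(2)$, neither of which is a permutation matrix, so $A$ is divisible. The remaining case is when the two columns of $\sgn(A)$ are the distinct standard basis vectors, which forces $A$ itself to be a permutation matrix, i.e.\ $A \in s(2)^{-1}$. In that case, divisibility follows from the trivial factorization $A = A \cdot {\bf1}_2$: both factors lie in the group of units, which violates the ``but not both'' clause of Definition~\ref{def_divisibility} and hence establishes divisibility.

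I do not anticipate any real obstacle, as the argument rests on a simple combinatorial observation specific to $n=2$: the $\{0,1\}$-valued columns admitted by a stochastic matrix are so few in number that either dominance or the permutation case is forced. The only subtlety worth flagging is that Lemma~\ref{lemma_cond_div} explicitly produces non-permutation factors and therefore cannot itself cover $A \in s(2)^{-1}$; that exceptional class must instead be dispatched by appealing to the general observation made after Definition~\ref{def_divisibility} that every element of the group of units is automatically divisible.
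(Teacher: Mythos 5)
Your argument is correct and is exactly the route the paper takes: the paper derives this fact directly from Lemma~\ref{lemma_cond_div}, and your case analysis of the possible sign-columns, together with the separate treatment of permutation matrices via the remark after Definition~\ref{def_divisibility}, simply fills in the details the paper leaves implicit.
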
\medskip
In other words, all elements of $s(2)$ are non-trivial products of elements of $s(2)$ (i.e., one does not need $\mathbb R_+^{2\times 2}\setminus s(2)$ for divisibility), which thus generalizes the known results of divisibility from $\mathbb R_+^{2\times 2}$ \cite{RS74,dCG81} to $s(2)$.
For $n=3$, while we gave an example of a family of indivisible matrices in $s(3)$ (Eq.~\eqref{eq:prime_3} ff.), at this point, we can prove that all primes are of this form by applying Lemma~\ref{lemma_cond_div}.

\medskip\begin{theorem}\label{thm_primes_s3}
Given $A\in s(3)$, $A$ is indivisible if and only if there exist
$\pi,\tau\in S_3$ such that
$$
\underline{\pi} \sgn(A)\underline{\tau}=\begin{pmatrix}
 0&1&1 \\ 1&0&1 \\ 1&1&0
\end{pmatrix}\,.
$$
\end{theorem}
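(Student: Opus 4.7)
My plan is to treat the two directions separately. The ``if'' direction is essentially built into the discussion preceding the theorem: the anti-identity sign pattern in Eq.~\eqref{eq:prime_3} is indivisible in $\mathbb R_+^{3\times 3}$ by~\cite{dCG81,RS74}, so Lemma~\ref{lemma_indiv} forces any $A \in s(3)$ with that sign pattern to be indivisible in $s(3)$. Because the divisibility status of $A$ is invariant under the $S_3 \times S_3$ action $A \mapsto \underline{\pi}A\underline{\tau}$, and because $\sgn(\underline{\pi}A\underline{\tau}) = \underline{\pi}\sgn(A)\underline{\tau}$, this readily extends to every $A$ whose sign pattern lies in the full orbit of the anti-identity.

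For the ``only if'' direction I would argue by contraposition: assume $\sgn(A)$ does not lie in that orbit, and show that $A$ must be divisible. The main tool is Lemma~\ref{lemma_cond_div}, which already provides divisibility whenever some column of $\sgn(A)$ dominates another in the componentwise order. The task thus reduces to classifying all sign patterns in $\sgn(s(3))$ whose columns are pairwise incomparable. Since each column of a stochastic sign pattern is a nonzero element of $\{0,1\}^3$, this is a short finite check organized by the number of ones per column: any column equal to $(1,1,1)^T$ dominates everything, and any two equal columns are trivially comparable, so both configurations are ruled out. If some column equals a standard basis vector $e_j$, then (after a row permutation bringing it to $e_1$) incomparability forces the remaining two columns to vanish in position $1$ and to be nonzero and distinct; a short check shows that neither can have two ones without becoming dominated by the other, so both are standard basis vectors. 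In that case $A$ itself is a permutation matrix, and therefore divisible via $A = A\circ e$ as noted below Definition~\ref{def_divisibility} (since $A \in s(3)^{-1}$). In the remaining sub-case every column has exactly two ones and all three are distinct, which forces the columns to be a permutation of $\{(1,1,0)^T,(1,0,1)^T,(0,1,1)^T\}$; this is precisely a column permutation of the anti-identity pattern, contradicting the hypothesis. Hence $A$ is divisible in every surviving branch.

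I expect the main obstacle to be purely bookkeeping in the case analysis, particularly because two conceptually distinct mechanisms are needed to establish divisibility: Lemma~\ref{lemma_cond_div} covers all cases in which column dominance occurs, while the permutation case requires instead a direct appeal to membership in $s(3)^{-1}$. No further tools beyond the sign-function machinery should be needed, and the enumeration is small enough to carry out by hand.
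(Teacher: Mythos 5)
Your proposal is correct and follows essentially the same route as the paper: the ``if'' direction via Lemma~\ref{lemma_indiv} and the known indivisibility of the pattern~\eqref{eq:prime_3}, and the ``only if'' direction by showing that the only sign patterns in $\sgn(s(3))$ with pairwise incomparable columns are (up to $S_3\times S_3$) the identity---handled by membership in $s(3)^{-1}$---and the anti-identity, with Lemma~\ref{lemma_cond_div} covering everything else. The only difference is that you spell out the finite column-by-column enumeration that the paper leaves as ``readily verified,'' and your case analysis is complete and correct.
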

\begin{proof}
 ``$\Leftarrow$'': Follows from Lemma~\ref{lemma_indiv} ff.
 ``$\Rightarrow$'': One readily verifies that there are only two elements of (equivalence classes of) $\sgn(s(3))$ where none of the columns are comparable in the sense of the assumption of Lemma~\ref{lemma_cond_div}:
 $$
 \begin{pmatrix}
 1&0&0\\0&1&0\\0&0&1
 \end{pmatrix}\quad\text{ and }\quad\begin{pmatrix}
 0&1&1\\1&0&1\\1&1&0
 \end{pmatrix}\,.
 $$
 The first matrix corresponds to $s(n)^{-1}\simeq S_n$ which is always divisible, meaning if (the $\sgn$ of) $A$ is not equivalent to the second matrix, it is divisible. This concludes the proof.
\end{proof}

\noindent Interestingly, this structure does not generalize to higher dimensions.
While there exist indivisible stochastic matrices of this form, e.g.
\cite{BHM76}
\begin{equation}\label{eq:indiv_s4}
\frac17\begin{pmatrix}
0&1&1&5\\
5&0&1&1\\
1&5&0&1\\
1&1&5&0
\end{pmatrix},
\end{equation}
the following decomposition can (in a slightly different form) be found in \cite{RS74}:
\begin{align*}
\begin{pmatrix}
0&\frac12&\frac12&\frac12\\
\frac13&0&\frac14&\frac14\\
\frac13&\frac14&0&\frac14\\
\frac13&\frac14&\frac14&0
\end{pmatrix}=\begin{pmatrix}
1&0&0&0\\
0&0&\frac12&\frac12\\
0&\frac12&0&\frac12\\
0&\frac12&\frac12&0\\
\end{pmatrix}
\begin{pmatrix}
0&\frac12&\frac12&\frac12\\
\frac13&\frac12&0&0\\
\frac13&0&\frac12&0\\
\frac13&0&0&\frac12\\
\end{pmatrix}\in s(4)
\end{align*}
In other words, in four and more dimensions considering $\sgn(A)$ may be inconclusive when examining divisibility.

A characterization of indivisible elements of $\mathbb R_+^{n\times n}$ and of the doubly stochastic matrices is given in \cite[Theorem~4.1 \& 5.1]{PHS98}.
Clearly, the indivisible stochastic elements of $\mathbb R_+^{n\times n}$ are also prime in $s(n)$, 
and the divisible doubly-stochastic matrices are also divisible in $s(n)$.
\medskip

\subsection{Generating sets of $s(2)$ and $s(3)$}\label{subsec_main_2}

After these results on divisibility, let us move on to the task of finding non-trivial generating sets of the stochastic matrices.
Recall that this amounts to finding a---in some sense ``small''---set $G\subseteq s(n)$
such that the semigroup generated by $G$ is equal to
$s(n)$.
For the remainder of this section we look at the generating-set problem in two and three dimensions---which is already highly non-trivial---and at the end, we discuss how to generalize our construction to arbitrary finite dimensions. For $n=2$, one finds the following: 

\medskip\begin{proposition} \label{prop:G_of_s2}
The semigroup generated by
\begin{equation}\label{eq:prop_G_of_s2}
G:=\Big\{\begin{pmatrix}
0&1\\
1&0
\end{pmatrix}\Big\}\cup\;{\rm conv}\Big\{\mathbf1,\begin{pmatrix}
1&1\\
0&0
\end{pmatrix}\Big\}
\end{equation}
is equal to $s(2)$ with $N_G(s(2))=4$, i.e., $G^4=s(2)$ while $G^3\subsetneq s(2)$.
\end{proposition}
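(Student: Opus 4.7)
The plan is to parametrize a generic $A \in s(2)$ by $A = \begin{pmatrix} a & b \\ 1-a & 1-b \end{pmatrix}$ with $a, b \in [0,1]$, and the convex-hull part of $G$ by the upper-triangular matrices $U_\lambda := \begin{pmatrix} 1 & \lambda \\ 0 & 1-\lambda \end{pmatrix}$ for $\lambda \in [0,1]$, noting $U_0 = \mathbf{1}_2 \in G$ so that trivially $G^k \subseteq G^{k+1}$. The proof then splits into two essentially independent parts: establishing $s(2) \subseteq G^4$ by two explicit decompositions, and $G^3 \subsetneq s(2)$ by enumerating reduced words of length three together with a concrete counterexample.

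For the first part, the key calculation is
\begin{equation*}
U_\alpha \sigma U_\beta = \begin{pmatrix} \alpha & 1-(1-\alpha)\beta \\ 1-\alpha & (1-\alpha)\beta \end{pmatrix}.
\end{equation*}
With $\alpha, \beta$ free in $[0,1]$, this realizes every $A$ with $a \leq b$: take $\alpha = a$ and $\beta = (1-b)/(1-a)$ when $a < 1$, while the degenerate case $a = 1$ is just $A = U_b$. The complementary case $a \geq b$ is dispatched by appending $\sigma$ on the right: an analogous computation of $U_\alpha \sigma U_\beta \sigma$ shows that any such $A$ can be matched with $\alpha, \beta \in [0,1]$ (by solving for $b = \alpha$ and $1 - a = (1-\alpha)\beta$). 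Both decompositions use at most four factors from $G$, so $s(2) \subseteq G^4$.

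For the strict inclusion $G^3 \subsetneq s(2)$, I would exhibit the concrete candidate $A_0 := \frac14 \begin{pmatrix} 3 & 1 \\ 1 & 3 \end{pmatrix}$ and show it is not a product of three elements of $G$. Every length-three word in the alphabet $\{\sigma\} \cup \{U_\lambda : \lambda \in [0,1]\}$ can be simplified using two elementary rules, namely that the $U_\lambda$'s form a commutative submonoid (so consecutive $U$-letters collapse into one) and that $\sigma^2 = \mathbf{1}_2$. Going through the eight possible $\{U, \sigma\}$-patterns shows that each product falls under one of the six normal forms $U_\lambda$, $\sigma$, $U_\lambda \sigma$, $\sigma U_\lambda$, $U_\alpha \sigma U_\beta$, or $\sigma U_\alpha \sigma$. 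A direct computation of each reveals that its image satisfies at least one of the conditions $a \in \{0,1\}$, $b \in \{0,1\}$, or $a \leq b$, whereas $A_0$ violates all of them.

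The main obstacle is purely organizational: one has to be systematic about enumerating the six normal forms, parametrizing their images, and verifying that the resulting union misses a chosen interior point of $s(2)$. The arithmetic behind the two core decompositions $U_\alpha \sigma U_\beta$ and $U_\alpha \sigma U_\beta \sigma$ is direct and requires no ingenuity beyond checking that the derived parameters $\alpha, \beta$ lie in $[0,1]$.
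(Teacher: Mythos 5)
Your proposal is correct and follows essentially the same route as the paper: a case split on which ``half'' of $s(2)$ the matrix lies in, realized by words of the form $U_\alpha\sigma U_\beta$ (three factors) and its $\sigma$-conjugated variant (four factors), plus a lower bound obtained by observing that only the pattern $U\sigma U$ yields a fully supported product of three generators and exhibiting a full-support matrix violating the resulting inequality. The only differences are cosmetic (a different parametrization, hence the split $a\le b$ versus $a+b\le 1$, and a different witness matrix), and your systematic enumeration of the six normal forms makes the lower-bound step slightly more explicit than the paper's.
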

 \begin{proof}
 Let $A\in s(2)$ be parameterized as
 $$
 A= \begin{pmatrix}
 1-a&b\\a&1-b
 \end{pmatrix}\, ,
 $$
for some $a,b\in[0,1]$. If $a=0$, then $A\in G$. For $a=1$, $N_G(A)=2$ because
 $$
 \begin{pmatrix}
 0&b\\1&1-b
 \end{pmatrix}=\begin{pmatrix}
 0&1\\1&0
 \end{pmatrix}\begin{pmatrix}
 1&1-b\\0&b
 \end{pmatrix}\in G\cdot G\,.
 $$
 Now assume $a\in(0,1)$. We distinguish two cases.
 \begin{itemize}
 \item[1.] 
 If $a+b\leq 1$, then $\frac{b}{1-a}\in[0,1]$ and the following decomposition is well defined:
 $$
 A=\sigma_x\begin{pmatrix}
 1&a\\0&1-a
 \end{pmatrix}\sigma_x\begin{pmatrix}
 1&\frac{b}{1-a}\\0&\frac{1-a-b}{1-a}
 \end{pmatrix}\, ,
 $$
 \item[2.] If $a+b> 1$, we apply $\sigma_x$ 
from the left to get a matrix where the sum of the off-diagonals does
 not exceed $1$. Then, by the previous step
\begin{align*}A=\sigma_x(\sigma_xA)&=\sigma_x\Big(\sigma_x\begin{pmatrix}
 1&1-a\\0&a
 \end{pmatrix}\sigma_x\begin{pmatrix}
 1&\frac{1-b}{a}\\0&\frac{a+b-1}{a}
 \end{pmatrix}\Big)\\
 &=\begin{pmatrix}
 1&1-a\\0&a
 \end{pmatrix}\sigma_x\begin{pmatrix}
 1&\frac{1-b}{a}\\0&\frac{a+b-1}{a}
 \end{pmatrix}\,.
\end{align*} 
 \end{itemize}
This also shows $N_G(s(2))\leq 4$. For the converse inequality, we have to find some $A\in s(2)$ such that $N_G(A)\geq 4$, i.e., $A$ cannot be written as a product of at most three elements from $G$. Indeed,
$A:=\frac23\cdot{\bf1}+\frac13\cdot\sigma_x\in s(2)$
is such a matrix: The only product in $G^3$ for which all entries can be non-zero is
$$
\begin{pmatrix}
    1&1-\lambda\\0&\lambda
\end{pmatrix}\sigma_x\begin{pmatrix}
    1&1-\mu\\0&\mu
\end{pmatrix}\, ,
$$
with $\lambda,\mu\in[0,1]$, hence this is only way we could express $A$. But if $A$ were of this form, then $\frac23=A_{22}=\lambda(1-\mu)\leq\lambda=A_{21}=\frac13$, a contradiction. 
 \end{proof}
\begin{remarkcustom}
\begin{itemize}
    \item[(i)] Let us quickly interpret this result from the perspective of quantum physics. The generating set $G$ in Proposition~\ref{prop:G_of_s2} can be realized by combining a bit-flip ($\sigma_x$) with the classical amplitude-damping noise, that is, with the Markovian dynamics
\begin{equation}\label{eq:Markovian_s2}
    t\mapsto \exp\Big(t \begin{pmatrix}
0&1\\0&-1
\end{pmatrix} \Big)=\begin{pmatrix}
1&1-e^{-t}\\0&e^{-t}
\end{pmatrix}.
\end{equation}
More precisely, the previous proof shows that a generic stochastic $2\times 2$-matrix can be implemented by interleaving the dynamics~\eqref{eq:Markovian_s2} with a precisely timed bit-flip, possibly with a final bit-flip at the end.
\item[(ii)] Of course, there are uncountably many generating sets of $s(n)$. The advantage of the set $G$ from Proposition~\ref{prop:G_of_s2} is that, for $n=2$, it offers a nice trade-off between the size of the generating set on the one hand, and the number of required factors on the other hand, while also allowing for a clear physical interpretation.
Indeed, recalling the example from the end of Section~\ref{sec_prelim}, excluding sub-intervals from the convex hull in~\eqref{eq:prop_G_of_s2} will make $N_G(s(2))$ grow drastically, possibly to infinity.
Similar effects have been observed for qubit channels before \cite{BGN14}. 
One ``less applied'' construction for a smaller generating set is to limit the convex-combination coefficients in \eqref{eq:prop_G_of_s2} to the following (measure-zero) generating set of $([0,1],\cdot)$:
Starting from the Cantor set $\mathcal C$---which is uncountable, of measure zero, and satisfies $\mathcal C+\mathcal C=[0,2]$ \cite[pp.~164-165]{Schroeder91}---the set $\bigcup_{n\in\mathbb N_0}(n+\mathcal C)$ finitely generates $(\mathbb R_+,+)$. Applying $x\mapsto e^{-x}$ to this turns it into a generating set of $([0,1],\cdot)$ with the desired properties.
\end{itemize}
\end{remarkcustom}\medskip

Moving on to higher dimensions, we already saw in Section~\ref{subsec_main_1} that $s(n)$ for $n\geq 2$ contains indivisible elements.
Of course, these have to be featured in any generating set $G$.
For $n=3$ this is essentially ``all one needs'' (in the sense that 
the relative interior of the following set $G$ is precisely the indivisible elements of $s(3)$ modulo the group action $S_3\times S_3$).
The idea of our proof will be to make repeated use of Lemma~\ref{lemma_cond_div}, either until we arrive at an elementary matrix or until we hit a prime.
 
\medskip\begin{theorem}\label{thm_s3}
The semigroup generated by
\begin{align}\label{eq:s3_generators}
G:=\Big\{\begin{pmatrix}
0&1&0\\
1&0&0\\
0&0&1
\end{pmatrix}\Big\}\cup\, {\rm conv}\Big\{\mathbf1,\begin{pmatrix}
1&0&1\\
0&1&0\\
0&0&0
\end{pmatrix}
\,,\,\begin{pmatrix}
1&0&1\\
0&0&0\\
0&1&0
\end{pmatrix}
\,,\,\begin{pmatrix}
0&0&1\\
1&0&0\\
0&1&0
\end{pmatrix}\Big\}
\end{align}
equals $s(3)$ with $G^{20}=s(3)$, i.e., $N_G(s(3))\leq 20$ as the number of factors from $G$ needed to obtain any stochastic matrix is upper bounded by 20.
\end{theorem}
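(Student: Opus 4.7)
The plan is to prove Theorem~\ref{thm_s3} by iteratively applying Lemma~\ref{lemma_cond_div} to peel off elementary-type factors from $A \in s(3)$ until the residual factor is a permutation, a non-permutation function matrix (every column a standard basis vector), or an indivisible element of $s(3)$. I would first establish three preliminaries: (i) every element of $S_3$ is a word of length at most $3$ in $\{\pi_{12}, M_3\} \subseteq G$, e.g.\ $M_3 \pi_{12} = \pi_{13}$ and $\pi_{12} M_3 = \pi_{23}$; (ii) the matrix $(1-\lambda)I + \lambda M_1 = \begin{pmatrix}1 & 0 & \lambda\\ 0 & 1 & 0\\ 0 & 0 & 1-\lambda\end{pmatrix}$ lies in $G$, so conjugation by $S_3$-elements realizes every elementary matrix $\underline\pi (E_2(a)\oplus 1)\underline\tau$ appearing in Lemma~\ref{lemma_cond_div}; and (iii) computing the generic combination $\alpha I + \beta M_1 + \gamma M_2 + \delta M_3$ explicitly shows that the relative interior of ${\rm conv}\{I, M_1, M_2, M_3\}$ consists exactly of stochastic matrices with sign pattern $\begin{pmatrix}+ & 0 & +\\ + & + & 0\\ 0 & + & +\end{pmatrix}$---a permutation-equivalent form of the prime sign pattern of Theorem~\ref{thm_primes_s3}---and that the $S_3 \times S_3$-orbit of this slice captures every indivisible element of $s(3)$.

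With these preliminaries in hand, I would proceed by strong recursion on the number of zero entries of $A$. The terminal cases are: (a) $A$ is a permutation, so $A \in G^{\leq 3}$; (b) $A$ is indivisible, so $A = \underline\pi X \underline\tau$ with $X \in {\rm conv}\{I, M_1, M_2, M_3\}$, yielding $A \in G^{\leq 7}$; (c) $A$ is a non-permutation function matrix, handled by explicit enumeration using identities like $M_1 M_2 = \begin{pmatrix}1 & 1 & 1\\ 0 & 0 & 0\\ 0 & 0 & 0\end{pmatrix}$ and $\pi_{12} M_1 \pi_{12} = \begin{pmatrix}1 & 0 & 0\\ 0 & 1 & 1\\ 0 & 0 & 0\end{pmatrix}$, placing $A$ in $G^{\leq 4}$. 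Otherwise, $A$ is divisible with at least one non-deterministic column, and Lemma~\ref{lemma_cond_div} yields $A = BC$ with $C$ elementary (a conv element bracketed by two permutations) and $B$ having strictly more zeros than $A$; recurse on $B$. Since any non-permutation in $s(3)$ has at most six zeros, the recursion terminates in at most six steps.

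The main obstacle is the tight constant $20$: a naive count gives approximately $7 + 6 \cdot 4 \approx 30$ factors, so careful amortization of permutations is essential. The key observation is that the trailing permutation on the right of the elementary factor $C_j$ merges with the leading permutation on the left of $C_{j+1}$ into a single $S_3$-word of length $\leq 3$, so each intermediate iteration effectively contributes one convex-combination factor plus at most three permutation factors. Paired with a case analysis showing that the worst-case terminal (indivisible residual, $7$-factor cost) forces constraints on the sign pattern of $A$ that limit the iteration count, while the worst-case iteration length (reaching a permutation terminal) admits the cheaper $3$-factor terminal cost, one concludes $N_G(s(3)) \leq 20$ in all branches.
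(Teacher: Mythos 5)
Your overall strategy is the paper's: peel off elementary factors via Lemma~\ref{lemma_cond_div}, bound the number of iterations by $6$ through the zero count, and merge adjacent permutations. However, there are two genuine gaps. The first is quantitative and it kills the constant $20$. You only claim that every element of $S_3$ is a word of length at most $3$ in elements of $G$; with that, each iteration costs $1+3=4$ factors, six iterations already give $24$, and your closing appeal to ``careful amortization'' and an unspecified ``case analysis'' is not an argument that recovers $20$---with your constants every branch lands near $27$. The fix is that $N_G(S_3)=2$: $G$ contains $\mathbf 1$, the transposition $\underline{(1\,2)(3)}$, and the $3$-cycle $\underline{(1\,3\,2)}$, and every element of $S_3$ is a product of at most two of these. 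Then each iteration costs $1+2=3$ factors and the count closes as $2+3\cdot 6=20$, provided one also shows---as the paper does---that when the maximal number $m=6$ of iterations occurs, the residual left factor must be a permutation: otherwise it would have a zero row, forcing a zero row in $A$, which contradicts the fact that $m=6$ requires $A$ to have no zero entries at all.

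The second gap is in your terminal trichotomy. Your recursion step asserts that whenever $A$ is divisible and has a non-deterministic column, Lemma~\ref{lemma_cond_div} produces a $B$ with strictly more zeros. This is false for matrices whose sign pattern is, up to $S_3\times S_3$, equal to $\bigl(\begin{smallmatrix}0&0&1\\0&0&1\\1&1&0\end{smallmatrix}\bigr)$: these are divisible and have a non-deterministic third column, but the only comparable column pairs are the first two, whose common value is a standard basis vector, so the ``strictly more zeros'' clause of the lemma is unavailable and the recursion stalls. Such matrices are neither permutations, nor function matrices, nor indivisible, so they fall outside your cases (a)--(c) and must be added as a fourth terminal case. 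Conveniently, they lie in the convex hull of~\eqref{eq:s3_generators} up to permutations (take $a=1$, $c=0$, $b\in[0,1]$ in the parametrization $\bigl(\begin{smallmatrix}a&0&1-c\\1-a&b&0\\0&1-b&c\end{smallmatrix}\bigr)$ of that convex hull), so they cost at most $1+2N_G(S_3)=5$ factors, which is compatible with the final bound since in that branch $m\leq 5$.
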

\begin{proof}
First, let us show that $\langle G\rangle_s=s(3)$. For this, note that every permutation matrix can be constructed from at most two elements from $G$: in cycle notation, $\underline{(1\,2)(3)},\underline{(1)(2)(3)},\underline{(1\,3\,2)}\in G$ and the remaining permutation matrices are in $G\cdot G$ because $\underline{(1)(2\,3)}=\underline{(1\,2)(3)}\cdot\underline{(1\,3\,2)}$, $\underline{(1\,2\,3)}=\underline{(1\,3\,2)}^2$, and $\underline{(1\,3)(2)}=\underline{(1\,3\,2)}\cdot\underline{(1\,2)(3)}$.
Thus, in slight abuse of notation, $N_G(S_3)=2$.

With this, given any $A\in s(3)$, our basic idea is to consider two cases: 1.~If Lemma~\ref{lemma_cond_div} does not yield a decomposition of $A$ into stochastic matrices that feature more zeros than $A$, then we have to show explicitly that $A\in \langle G\rangle_s$.
2.~If Lemma~\ref{lemma_cond_div} does yield such a decomposition $A=BC$, then it suffices to show that
$B,C\in\langle G\rangle_s$. But $C\in\langle G\rangle_s$ holds because the building blocks
\begin{equation}\label{eq:buildingblocks}
    \Big\{\begin{pmatrix}
 1&0&a\\0&1&0\\0&0&1-a
\end{pmatrix}:a\in[0,1]\Big\}
\end{equation}
are in $G$,
and because all permutation matrices are in $\langle G\rangle_s$. 

Altogether, what this shows is that we can disregard all elements of $s(3)$ that satisfy the following criteria from Lemma~\ref{lemma_cond_div} (which guarantee a decomposition $A=BC$ such that $B$ features more zeros than $A$): there exist $i\neq k$ such that $\sgn(A)e_i\geq\sgn(A)e_k$, and $Ae_i\neq e_j$ for all $j$.
Crucially, as $\sgn(A)$ contains all the relevant information for checking this, we can, equivalently, 
consider the quotient space $\sgn(s(3))/(S_3\times S_3)$ (as opposed to $\sgn(s(3))$, because every permutation matrix is in $\langle G\rangle_s$).
Eliminating those elements of $\sgn(s(3))/(S_3\times S_3)$ which satisfy the above criteria, we are left with the following five equivalence classes:
\begin{equation}\label{eq:remaining_matrices}
    {\bf1},
\begin{pmatrix}
 0 & 1 & 1 \\
 1 & 0 & 1 \\
 1 & 1 & 0 
 \end{pmatrix},
 \begin{pmatrix}
 0 & 0 & 0 \\
 0 & 0 & 0 \\
 1 & 1 & 1
 \end{pmatrix}, \begin{pmatrix}
 0 & 0 & 1 \\
 0 & 0 & 1 \\
 1 & 1 & 0
 \end{pmatrix},
 \begin{pmatrix}
 0 & 0 & 0 \\
 0 & 0 & 1 \\
 1 & 1 & 0
 \end{pmatrix}
\end{equation}
Some comments on these matrices are in order:
 \begin{itemize}
 \item The first element represents all permutation matrices, which are all in $\langle G\rangle_s$ as we already showed.
 \item The second element represents precisely the indivisible elements of $s(3)$ from Theorem~\ref{thm_primes_s3}.
These are in $\langle G\rangle_s$ 
because the convex hull in~\eqref{eq:s3_generators} is equal to
$$
\mathcal C:=\Big\{ \begin{pmatrix}
 a&0&1-c\\1-a&b&0\\0&1-b&c
\end{pmatrix} : a,b,c\in[0,1],a\geq b\geq c \Big\}\, ;
$$
This follows directly from the convex decomposition
\begin{align*}
  &   \begin{pmatrix}
 a&0&1-c\\1-a&b&0\\0&1-b&c
\end{pmatrix}=\\
&\ =c\,{\bf1}+(b-c)\begin{pmatrix}
1&0&1\\
0&1&0\\
0&0&0
\end{pmatrix}
+(a-b)\begin{pmatrix}
1&0&1\\
0&0&0\\
0&1&0
\end{pmatrix}
+(1-a)\begin{pmatrix}
0&0&1\\
1&0&0\\
0&1&0
\end{pmatrix}
\end{align*}
with $1\geq a\geq b\geq c\geq 0$.
In particular, for every $A\in s(3)$ with
$$
\sgn(A)\in\Big\{\underline{\pi}\begin{pmatrix}
 0&1&1\\1&0&1\\1&1&0
\end{pmatrix}\underline{\tau}:\pi,\tau\in S_3\Big\}
$$
there certainly exist permutations $\pi_A,\tau_A\in S_3$ such that $\underline{\pi_A}A\underline{\tau_A}\in\mathcal C$.
 \end{itemize}
Also the set $\mathcal C$ already covers the last two elements of~\eqref{eq:remaining_matrices} up to permutations
(choose $a=1$, $b\in[0,1]$, $c=0$).
This leaves the third element of~\eqref{eq:remaining_matrices}, which is in $\langle G\rangle_s$ because
\begin{equation}\label{eq:decomp_111}
\begin{pmatrix}
 0 & 0 & 0 \\
 0 & 0 & 0 \\
 1 & 1 & 1
 \end{pmatrix}= \begin{pmatrix}
 0 & 1 & 0 \\
 0 & 0 & 0 \\
 1 & 0 & 1
 \end{pmatrix}\begin{pmatrix}
 0 & 1 & 0 \\
 0 & 0 & 0 \\
 1 & 0 & 1
 \end{pmatrix}\in\langle G\rangle_s^2=\langle G\rangle_s\,.
 \end{equation}
Altogether, this proves the existence of a decomposition of any $A\in s(3)$ in terms of~\eqref{eq:s3_generators}.

Next is the upper bound 20 on the number of factors.
Recall that, starting from some $A\in s(3)$ our strategy was to apply Lemma~\ref{lemma_cond_div} to decompose $A=BC$ (assuming $B$ has more zeros than $A$) as often as possible, and whatever we are left with we proved explicitly to be in $\langle G\rangle_s$.
This results in a decomposition
$
A=B_1C_1=(B_2C_2)C_1=\ldots=B_mC_m\ldots C_1
$ for some $m\in\mathbb N_0$.
Here, $C_j$ is a building block from Eq.~\eqref{eq:buildingblocks} (up to permutation). In other words, there exist permutations $\sigma_1,\ldots,\sigma_m,\tau_1,\ldots,\tau_m\in S_3$ as well as $g_1,\ldots,g_m\in G$ such that
$
A=B_m\prod_{j=1}^m\underline{\tau_j}g_j\underline{\sigma_j}
$. Combining products $\underline{\sigma_j}\,\underline{\tau_{j-1}}$ into one permutation matrix $\underline{\pi_j}$ (with $\pi_1:=\sigma_1$) yields the decomposition
\begin{equation}\label{eq:maxfactors_A}
    A=B_m\underline{\tau_m}\prod_{j=1}^m g_j\underline{\pi_j}\,.
\end{equation}
As established before, $N_G(S_3)=2$ which guarantees that $g_j\underline{\pi_j}\in G^3$ for all $j$. Thus Eq.~\eqref{eq:maxfactors_A} yields the upper bound for $N_G(A)\leq N_G(B_m\underline{\tau_m})+3m$.
For bounding $N_G(B_m\underline{\tau_m})$, in turn, we will consider the following two scenarios:
\begin{itemize}
    \item[1.] Lemma~\ref{lemma_cond_div} was applied $6$ times, i.e., $m=6$. This is the maximum possible amount because after each step the resulting matrix gains a zero---but the number of non-zero elements (which is at most $3^2=9$) cannot be less than $3$ because of stochasticity. Hence, in this case, all entries of $A$ were non-zero and $B_6\underline{\tau_6}$ has exactly three non-zero entries. Therefore, up to permutations, $B_6\underline{\tau_6}$ is either ${\bf1}$, 
    \begin{equation}\label{eq:g_or_g2}
g=\begin{pmatrix}
1&0&1\\
0&0&0\\
0&1&0
\end{pmatrix}(\in G)\quad \text{or}\quad \begin{pmatrix}
1&1&1\\
0&0&0\\
0&0&0
\end{pmatrix}=\begin{pmatrix}
1&0&1\\
0&0&0\\
0&1&0
\end{pmatrix}^2.
    \end{equation}
    We claim that the latter two cases are not possible, i.e.~$B_6\underline{\tau_6}$ has to be a permutation which would show
    $$
    N_G(A)\leq N_G(B_m\underline{\tau_m})+3m\leq N_G(S_3)+3m\leq 2+3\cdot 6=20\,.
    $$
    Indeed, if $B_6$ were one of the matrices from Eq.~\eqref{eq:g_or_g2}, then $B_6\underline{\tau_6}$ had at least one row of zeros so the same would be true for $A=B_6\underline{\tau_6}\prod_{j=1}^6 g_j\underline{\pi_j}$.
    But this contradicts the fact that all entries of $A$ were non-zero.
    \item[2.] Lemma~\ref{lemma_cond_div} was applied less than 6 times, i.e., $m\leq 5$. In this case $\sgn(B_m)$ could, up to permutation, be any matrix from Eq.~\eqref{eq:remaining_matrices}.
    As in case 1.~if $B_m\underline{\pi_m}$ were matrix~\eqref{eq:decomp_111}, then $A=B_m\underline{\tau_m}\prod_{j=1}^m g_j\underline{\pi_j}$ would be of the same form; hence
    $A=\underline{\pi_1}g_1g_2\underline{\pi_2}$ for some $\pi_1,\pi_2\in S_3$, $g_1,g_2\in G$ so
    $N_G(A)\leq 2N_G(S_3)+2=6$.
    Otherwise, all other matrices from Eq.~\eqref{eq:remaining_matrices} are in $G$ (up to permutations); hence
    $N_G(B_m\underline{\pi_m})\leq 2N_G(S_3)+1=5$ and thus $N_G(A)\leq 3m+5\leq 3\cdot 5+5=20$.
\end{itemize}
Altogether this shows $N_G(s(3))\leq 20$, as claimed.
\end{proof}

Our strategy can be summarized as follows: If some set $G\subseteq s(n)$ features elementary matrices~\eqref{eq:buildingblocks}, a 2-level flip
$
{\bf1}\oplus\sigma_x
$ (up to permutations), and a cyclic permutation,
then $G$ becomes a generating set as soon as one adds all $A\in s(n) $ (up to $S_n\times S_n$) which Lemma~\ref{lemma_cond_div} does not apply to.
One fact used here is that a 2-level flip and a cyclic permutation make for the smallest generating set of $S_n$, where the number of needed factors is known to scale as $\Theta(n^2)$ 
 \cite{PermutationGroupDiameter}.
We will comment on
the problems with this proof strategy for $n\geq 4$
in the following Outlook section.

\section{Outlook}\label{sec_outlook}

Finally, we want to touch upon some open questions.
First, is the upper bound $N_G(S)\leq 20$ in Theorem~\ref{thm_s3} optimal? We conjecture that this is true, seeing how this number corresponds to a factorization $A=\underline{\pi_0}\prod_{i=1}^6g_i\underline{\pi_i}$, $\pi_i\in S_n$, $g_i\in\eqref{eq:buildingblocks}$ alternating between permutations and two-level transformations where, as discussed in the proof of our main result, generically one has to apply Lemma~\ref{lemma_cond_div} six times (in three dimensions).
However, proving lower bounds for $N_G(S)$---as seen already in Proposition~\ref{prop:G_of_s2}---becomes increasingly complicated the larger the bound is. On the other hand, one may argue that upper bounding $N_G(S)$ (as we did) is the more important task as it allows for basic error analysis, recall also Remark~\ref{rem_div_error_bound}.

Next is the question raised at the end of Section~\ref{subsec_main_2}, namely: Can the proof strategy of Theorem~\ref{thm_s3} be generalized to higher dimensions?
The reason that for $n=3$ this strategy yields a well-structured generating set $G$ is that in three dimensions, Lemma~\ref{lemma_cond_div} fails if and only if the matrix is prime.
For higher dimensions this is not true anymore---recall Eq.~\eqref{eq:indiv_s4}~ff.---meaning the structure of the generating set obtained this way is unclear for $n\geq 4$.
The problem here is that in dimensions $n\geq 4$ simply checking $\sgn(A)$ is not conclusive anymore with regards to whether $A\in s(n)$ is (in)divisible. 
As such, one would have to find other strategies and ways to decompose higher-dimensional stochastic matrices, beyond just two-level transformations.
On the other hand, one may ask how the error bound scales with increasing $n$. We conjecture that, for a generating set $G$ of a ``similar form'' an upper bound for $N_G(s(n))$ of order $\mathcal O(n^4)$ should be possible:
The permutations can be generated in $\Theta(n^2)$ (as noted earlier) and the division algorithm from Lemma~\ref{lemma_cond_div} returns something trivial after at most $n^2-n$ (maximal number of zeros in a stochastic matrix) times before becoming trivial.
However, of course, the details crucially depend on the precise generating set $G$ and whether the \textit{divisible} matrices for which Lemma~\ref{lemma_cond_div} fails are included in $G$ or whether they get broken up in another way.

Finally, it would be interesting to investigate what portion of stochastic matrices (w.r.t.~the standard Lebesgue measure) decompose into elementary matrices~\eqref{eq:buildingblocks} and permutation matrices, and what percentage inevitably ``gets stuck'' in a non-trivial indivisible element of $s(3)$ when decomposing it via Lemma~\ref{lemma_cond_div}.
This might be insightful for a fixed dimension---to understand ``how many'' matrices are of this simple form---as well as for varying dimension to understand how this portion scales with $n$,
where the latter could give insights into the ``frequency'' of such primes.

\backmatter





\bmhead{Acknowledgements}

The hospitality of Karol {\.Z}yczkowski and the Jagiellonian University, where this project was initiated, is gratefully acknowledged.
FvE has been supported by the Einstein Foundation (Einstein Research Unit on Quantum Devices) and the MATH+ Cluster of Excellence.
FS acknowledges the projects DESCOM VEGA 2/0183/2, DeQHOST APVV-22-0570, and the SAIA scholarship.

\section*{Declarations}

The authors declare that they have no competing interests.

 \bibliography{control21vJan20}

\end{document}